\title{Orlov's Theorem in the Smooth Proper Case}
\author{Noah Olander}
\date{}
\newtheorem*{thm}{Theorem}
\newtheorem*{lemmaa}{Lemma A}
\newtheorem*{lemmab}{Lemma B}
\newtheorem*{lemmac}{Lemma C}
\newtheorem{prop}{Proposition}
\newtheorem{lemma}{Lemma}
\begin{document}

\maketitle

\begin{abstract}
    We extend Orlov's result that certain functors between derived categories of smooth projective varieties are Fourier--Mukai transforms to the case when those varieties are smooth and proper.
\end{abstract}

\section{Introduction}
In the landmark paper \cite{orlov1997equivalences}, Orlov proved that for smooth projective varieties $X, Y$ over a field $k$, a fully faithful, $k$-linear, exact functor $D^b_{coh}(X) \to D^b_{coh}(Y)$ having a right adjoint is isomorphic to a Fourier--Mukai transform. In this paper, we will show how to use modifications of Orlov's argument introduced by Ballard, de Jong, and this author to prove Orlov's Theorem in the smooth proper case:

\begin{thm}
Let $X, Y$ be smooth proper varieties over a field $k$. Let
$F : D^b_{coh} (X) \to D^b_{coh} (Y)$ be a fully faithful functor which is exact and $k$-linear. Then there is an object $E$  of $D^b_{coh} (X \times Y)$ such that $F$ is isomorphic to the functor
\begin{equation*}
    \Phi_E (K) = \mathbf{R} pr_{2 *} (\mathbf{L}pr_1^*K \otimes ^{\mathbf{L}}_{\mathcal{O}_{X \times Y}} E).
\end{equation*}
\end{thm}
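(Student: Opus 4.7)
The plan is to adapt Orlov's original argument, replacing the steps that genuinely require $X$ to be projective. Two places in Orlov's proof rely on an ample line bundle: the choice of a convenient generator $\bigoplus_i \mathcal{O}_X(-i)$, and the construction of the Fourier--Mukai kernel via the Beilinson resolution of the diagonal. The modifications of Ballard, de Jong, and this author presumably circumvent each of these.

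A preliminary observation is that Orlov's adjoint hypothesis becomes automatic in the smooth proper setting: since $X, Y$ are smooth and proper over $k$, we have $D^b_{coh}(X) = \mathrm{Perf}(X)$ (and similarly for $Y$), both categories are $\mathrm{Hom}$-finite, and they admit Serre functors given by $(-) \otimes \omega_X[\dim X]$. A standard formal argument then shows that any exact $k$-linear functor between such categories admits both a left and a right adjoint $F^L, F^R$, so one can use whichever is more convenient at each stage.

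To replace Orlov's explicit generator, I would invoke the Bondal--Van den Bergh theorem to obtain an abstract classical generator $G$ of $D^b_{coh}(X)$. The functor $F$ is then determined up to isomorphism by $F(G)$ together with the induced action of the dg-algebra $\mathbf{R}\mathrm{Hom}_X(G, G)$, so the problem reduces to constructing $E \in D^b_{coh}(X \times Y)$ such that $\Phi_E$ reproduces this data on $G$.

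The core step, and the main obstacle, is the construction of $E$. One natural approach is to choose a projective birational cover $\pi : X' \to X$ with $X'$ smooth projective (produced by Chow's lemma plus resolution of singularities), lift the question to $X'$ so that Orlov's theorem in the projective case yields a kernel $E' \in D^b_{coh}(X' \times Y)$, and then descend $E'$ along $\pi \times \mathrm{id}_Y$ to the desired $E$, using that $\mathbf{R}\pi_* \mathcal{O}_{X'} = \mathcal{O}_X$. The lift and the descent are precisely where the cited modifications of Orlov's argument should enter and where most of the technical difficulty is concentrated: one has to show that $F$ can be transported to a fully faithful functor on $D^b_{coh}(X')$ in a way compatible with $\pi^*$, and that the resulting kernel on $X' \times Y$ is in fact pulled back from $X \times Y$. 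Once $E$ has been constructed with $\Phi_E(G) \cong F(G)$ compatibly with the algebra action, the isomorphism $\Phi_E \cong F$ on all of $D^b_{coh}(X)$ follows from $G$ being a generator together with the full faithfulness of $F$.
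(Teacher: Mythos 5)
There are genuine gaps here, concentrated exactly where you admit the difficulty lies. First, the reduction to the projective case does not work. A projective model $X'$ of $X$ produced by Chow's lemma need not be smooth, and making it smooth requires resolution of singularities, which is unavailable over a general field $k$ of positive characteristic (the theorem is stated for arbitrary $k$). More fundamentally, even granting a smooth projective $\pi : X' \to X$, there is no functor on $D^b_{coh}(X')$ to which Orlov's projective theorem applies: $\mathbf{L}\pi^*$ is not essentially surjective, so $F$ does not ``transport'' to $X'$, and the candidate $F \circ \mathbf{R}\pi_*$ is not fully faithful (it is not even conservative). Descending a kernel $E'$ from $X' \times Y$ would compute $\Phi_{E'} \circ \mathbf{L}\pi^*$, which only sees the image of $\mathbf{L}\pi^*$; nothing in the proposal produces $E'$ in the first place.

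Second, the two framing claims --- that $F$ is ``determined up to isomorphism by $F(G)$ together with the induced action of $\mathbf{R}\mathrm{Hom}_X(G,G)$,'' and that $\Phi_E(G) \cong F(G)$ compatibly with the algebra action implies $\Phi_E \cong F$ --- are false for exact functors of triangulated categories without a dg or stable $\infty$-categorical enhancement of the functor. Establishing this kind of rigidity for a bare exact functor is precisely the content of Orlov's theorem, so assuming it is circular. The actual proof constructs $E$ directly by convolving a resolution of $\mathcal{O}_\Delta$ by box products of vector bundles (using that a smooth proper variety has enough vector bundles) after applying $F$ to the second factors; it then shows $\Phi_E$ and $F$ agree on skyscraper sheaves of points, upgrades this to agreement on all of $Coh(\mathcal{O}_X)$ up to a Fourier--Mukai autoequivalence via Gabriel's theorem, and finally extends to all of $D^b_{coh}(X)$ by an induction on the width of a complex, using the sequences $\mathcal{O}_X(-nD_i)$ attached to an affine cover as a substitute for an ample sequence. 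Each of these steps is a nontrivial argument that your outline does not supply a replacement for.
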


We find the modifications to be pleasantly natural, so that what shines through is the power and elegance of Orlov's original proof. We also have access to the results of Bondal--Van den Bergh on strong generators and existence of adjoints (\cite{bondal2003}), which help simplify our statements and our proofs. The paper is organized as follows:

In Section 2, we show how to produce an object $E \in D^b_{coh} (X \times Y)$. Orlov's construction proceeds by embedding $X$ in projective space and then integrating a complex obtained from the Beilinson resolution of the diagonal. In \cite{2009arXiv0905.3148B}, Ballard shows how to replace the Beilinson resolution of the diagonal of $\mathbf{P}^N \times \mathbf{P}^N$ with any resolution of the diagonal of $X \times X$ by box products of vector bundles, thus immediately producing an object of $D^b_{coh} (X \times Y).$ There is no difficulty extending Ballard's ideas from the quasi-projective case to the smooth proper case since a smooth proper variety has enough vector bundles.

Having produced a kernel $E$, we conclude Section 2 by asking for which coherent $\mathcal{O}_X$-modules $\mathcal{F}$ is $F(\mathcal{F}) \cong \Phi_E(\mathcal{F})$. Orlov uses vanishing of cohomology to prove this for all powers of a very ample invertible sheaf $\mathcal{O}_X(1),$ and actually produces natural isomorphisms in this case. Not having ample line bundles at our disposal, we instead follow an idea of de Jong and show that $F$ and $\Phi_E$ agree on the spanning class of skyscraper sheaves of points.

In sections 3 and 4, we show that in fact $F$ and $\Phi_E$ differ only by tensor product with a line bundle on $X$. Since tensoring with a line bundle is a Fourier--Mukai functor and compositions of Fourier--Mukai functors are Fourier--Mukai, this completes the proof. The task is split into two separate parts: First we show in Section 3 that this is true when we restrict to the category of coherent sheaves. Then in Section 4 we extend this to the entire derived category. Section 4 should be compared with the proof of Proposition 2.16 in \cite{orlov1997equivalences} from which our section is taken almost verbatim. The key difference is that while Orlov uses an ample sequence $\mathcal{O}_X(-n)$, we use the the \emph{sequences} $\mathcal{O}_X (-nD_i)$ where $D_i$ are the complements of finitely many affine opens $U_i$ which cover $X$. We are able to make this work by exploiting the vanishing of
\begin{equation*}
    \mathrm{colim}_{n \geq 0}H^j (X, \mathcal{F}(nD_i))
\end{equation*}
for $j>0$, $\mathcal{F}$ coherent as a replacement for actual Serre vanishing.

The author would like to warmly thank Johan de Jong for asking him this question and for being so enthusiastic and available for mathematical discussions, especially during these strange times. This work would not have been possible without his idea to use points instead of negative line bundles.

\section{Producing a Kernel}

We produce here a candidate for a kernel of our functor. This section is basically an exposition of the construction in \cite[\href{https://stacks.math.columbia.edu/tag/0G07}{Tag 0G07}]{stacks-project}, so some parts are sketched. The outcome is summarized in Proposition 1, which is all that is needed for the sequel.

Let $X, Y$ be smooth proper varieties over a field $k$, and let $F : D^b_{coh} (X) \to D^b_{coh} (Y)$ be a fully faithful, exact, $k$-linear functor. The following Lemma will play a similar role in our construction to the role Lemma 2.4 of \cite{orlov1997equivalences} plays in Orlov's.
\begin{lemma}
$F$ is bounded: There is $m>0$ such that for every coherent sheaf $\mathcal{F}$ on $X$, $F(\mathcal{F})$ has nonzero cohomology sheaves only in degrees $[-m, m].$
\end{lemma}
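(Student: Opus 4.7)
The strategy is to extract a uniform amplitude bound on $F(\mathcal{F})$ from uniform $\mathrm{Ext}$-vanishing, using the classical generators and adjoints supplied by Bondal--Van den Bergh. Since $Y$ is smooth proper, $D^b_{coh}(Y)$ admits a classical generator $G_Y$, which we may take to be a vector bundle (a smooth proper variety has enough vector bundles, as noted in the introduction), and $F$ admits a left adjoint $F^L \colon D^b_{coh}(Y) \to D^b_{coh}(X)$. Set $H := F^L(G_Y) \in D^b_{coh}(X)$; this is a fixed bounded complex of some amplitude $[a, b]$, independent of $\mathcal{F}$.

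For any $\mathcal{F} \in \mathrm{Coh}(X)$, adjunction yields $\mathrm{RHom}_Y(G_Y, F(\mathcal{F})) \cong \mathrm{RHom}_X(H, \mathcal{F})$. Using the spectral sequence $E_2^{p, q} = \mathrm{Ext}^p_X(\mathcal{H}^{-q}(H), \mathcal{F}) \Rightarrow \mathrm{Ext}^{p + q}_X(H, \mathcal{F})$ together with the vanishing of $\mathrm{Ext}^p_X$ between coherent sheaves outside $p \in [0, d_X]$ (where $d_X := \dim X$), we find $\mathrm{Ext}^i_Y(G_Y, F(\mathcal{F})) = 0$ for $i \notin [-b, d_X - a]$, a bound uniform in $\mathcal{F}$.

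The core step is an amplitude-detection lemma: for $G$ a classical generator of $D^b_{coh}(Y)$ that is a vector bundle and any $K \in D^b_{coh}(Y)$, if $\mathrm{Ext}^i_Y(G, K) = 0$ for $i \notin [c, d]$ then the cohomology sheaves of $K$ vanish outside degrees $[c - d_Y, d]$. I would prove the upper bound by contradiction: assume $\mathcal{H}^\beta(K) \neq 0$ with $\beta > d$, and consider the hypercohomology spectral sequence $E_2^{p, q} = \mathrm{Ext}^p_Y(G, \mathcal{H}^q(K)) \Rightarrow \mathrm{Ext}^{p + q}_Y(G, K)$. The top row $q = \beta$ has no incoming differentials (rows $q > \beta$ vanish); it has only outgoing $d_r \colon E_r^{p, \beta} \to E_r^{p + r, \beta - r + 1}$ into lower rows. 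If every $E_\infty^{p, \beta}$ were zero, the canonical truncation $K \to \mathcal{H}^\beta(K)[-\beta]$ would induce the zero map on $\mathrm{RHom}(G, -)$; but since $G$ is a classical generator, $\mathrm{RHom}(G, -)$ is conservative on morphisms (a standard consequence of unwinding the definition of the thick subcategory generated by $G$), forcing the truncation map itself to vanish --- contradicting that it is the identity on $\mathcal{H}^\beta$. Hence some $\mathrm{Ext}^{p + \beta}_Y(G, K) \neq 0$ with $p \in [0, d_Y]$, giving $\beta \leq d$. The lower bound is proved by the symmetric argument on the bottom row, or equivalently by Serre duality using that $G_Y \otimes \omega_Y$ is also a classical generator on the smooth proper $Y$.

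Applying the lemma to $K = F(\mathcal{F})$ places $F(\mathcal{F})$ in amplitude $[-b - d_Y,\, d_X - a]$ uniformly in $\mathcal{F}$, and $m := \max(b + d_Y,\, d_X - a)$ concludes the proof. The main obstacle is the amplitude-detection lemma itself --- specifically, verifying the conservativity of $\mathrm{RHom}(G, -)$ on morphisms (not merely on objects) from the definition of classical generation, and the careful book-keeping at the boundary row of the spectral sequence to see that nontrivial top cohomology of $K$ forces a nonzero $\mathrm{Ext}^i_Y(G, K)$ in the expected range.
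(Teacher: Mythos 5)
Your first two steps are fine: the left adjoint $F^L$ exists by Bondal--Van den Bergh, and the adjunction plus the spectral sequence for $\mathrm{RHom}_X(H,\mathcal{F})$ does give a range $[-b,\,d_X-a]$, independent of $\mathcal{F}$, outside of which $\mathrm{Ext}^i_Y(G_Y,F(\mathcal{F}))$ vanishes. The gap is exactly where you suspect it: the amplitude-detection lemma. Its proof rests on the claim that $\mathrm{RHom}(G,-)$ is conservative on \emph{morphisms} for a classical generator $G$, and this is false. The thick-subcategory argument proves conservativity on \emph{objects} (the full subcategory of $M$ with $\mathrm{Hom}(M[i],K)=0$ for all $i$ is closed under cones by the five lemma), but for a fixed morphism $f\colon K\to L$ the class of $M$ on which $f$ induces zero on all $\mathrm{Hom}(M[i],-)$ is \emph{not} closed under cones: from a triangle $M'\to M\to M''$ you can only factor $f\circ\varphi$ through $M''$, with no control over the factoring map. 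Only the dg-enhanced $\mathrm{RHom}(G,-)$ is faithful; passing to cohomology loses morphisms. Concretely, for $A=k[\epsilon]/(\epsilon^2)$ the object $A$ classically generates $D_{perf}(A)$, yet the two-term complex $P=(A\xrightarrow{\epsilon}A)$ admits a nonzero map $P\to P[1]$ inducing zero on every $H^i=\mathrm{Hom}(A[-i],-)$. So from $E_\infty^{p,\beta}=0$ for all $p$ you cannot conclude that the truncation $K\to\mathcal{H}^\beta(K)[-\beta]$ is zero.

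This is not a repairable detail but the crux: for an arbitrary vector-bundle classical generator $G$ the detection statement with constants depending only on $\dim Y$ is itself doubtful. Indeed $\mathrm{Ext}^\beta(G,K)=\mathrm{Hom}_{\mathcal{O}_Y}(G,\mathcal{H}^\beta(K))$, which can perfectly well vanish for $\mathcal{H}^\beta(K)\neq 0$ (no single vector bundle $G$ has $\mathrm{Hom}(G,\mathcal{M})\neq 0$ for all nonzero $\mathcal{M}$), and the other top-row entries of your spectral sequence can be killed by differentials into lower rows; conservativity on objects only tells you that \emph{some} $\mathrm{Ext}^p(G,\mathcal{H}^\beta(K))$ is nonzero, not that anything in row $\beta$ survives to $E_\infty$. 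To detect amplitude by a generator one needs quantitative generation data (how many cones, and which shifts, are needed to build the detecting objects from $G$), and at that point you are doing the work of the argument the paper actually invokes: the proof of \cite[\href{https://stacks.math.columbia.edu/tag/0FZ8}{Tag 0FZ8}]{stacks-project} works covariantly on the source side, showing there are $m,N$ such that every coherent sheaf on $X$ is a summand of an object built from a fixed generator $G_X$ by at most $N$ cones on shifts $G_X[j]$ with $|j|\leq m$; applying $F$ then bounds the amplitude of $F(\mathcal{F})$ by that of $F(G_X)$ plus $m+N$, with no adjoints and no detection lemma needed.
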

\begin{proof}
This is proved in \cite[\href{https://stacks.math.columbia.edu/tag/0FZ8}{Tag 0FZ8}]{stacks-project}. It is a consequence of Bondal--Van den Bergh's theorem that $D^b_{coh} (X)$ has a strong generator. See \cite{bondal2003}.
\end{proof}
Since $X$ has enough vector bundles (see Lemma A in Section 4), a very general argument shows that every coherent sheaf on $X \times X$ is the quotient of a vector bundle $\mathcal{E} \boxtimes \mathcal{F}$ where $\mathcal{E}$, $\mathcal{F}$ are vector bundles on $X$ (\cite[\href{https://stacks.math.columbia.edu/tag/0FZ0}{Tag 0FZ0}]{stacks-project}). Therefore, we may construct a resolution of the structure sheaf of the diagonal $\Delta \subset X \times X$ by such vector bundles:
\begin{equation}
    \cdots \to \mathcal{E}_n \boxtimes \mathcal{F}_n \to \cdots \to \mathcal{E}_0 \boxtimes \mathcal{F}_0 \to \mathcal{O}_\Delta \to 0
\end{equation}
Then by applying $F$ to the second box tensor factors, we obtain a complex
\begin{equation}
    \cdots \to \mathcal{E}_n \boxtimes F(\mathcal{F}_n) \to \cdots \to \mathcal{E}_0 \boxtimes F(\mathcal{F}_0)
\end{equation}
in $D^b_{coh} (X \times Y)$ (the pullbacks and tensor products defining the box products in (2) are derived). By the K\"unneth formula, 
\begin{align*}
    \mathrm{Ext}^k _{\mathcal{O}_{X\times Y}}(\mathcal{E}_i \boxtimes F(\mathcal{F}_i), \mathcal{E}_j \boxtimes F(\mathcal{F}_j))
    &= \bigoplus _{m + n = k} \mathrm{Ext}^m_{\mathcal{O}_X} (\mathcal{E}_i , \mathcal{E}_j) \otimes _k \mathrm{Ext}^n_{\mathcal{O}_Y} (F(\mathcal{F}_i), F(\mathcal{F}_j)) \\
    &= \bigoplus _{m + n = k} \mathrm{Ext}^m_{\mathcal{O}_X} (\mathcal{E}_i , \mathcal{E}_j) \otimes _k \mathrm{Ext}^n_{\mathcal{O}_X} (\mathcal{F}_i, \mathcal{F}_j),
\end{align*}
where we have used fully faithfulness for the second equality. This vanishes if $k<0,$ so there is no obstruction to integrating the complex (2) to a right Postnikov system:

\begin{equation}
    \begin{tikzcd}[column sep=tiny]
         \cdots & &E_2 \ar[ll, "+1"'] \ar[dr] & &E_1 \ar[ll, "+1"'] \ar[dr] \ar[dr] & &E_0 \ar[ll, "+1"'] \ar[dr, "\cong"] &\\
         & \cdots \ar[rr] & & \mathcal{E}_2 \boxtimes F(\mathcal{F}_2) \ar[rr] \ar[ur] & & \mathcal{E}_1 \boxtimes F(\mathcal{F}_1) \ar[rr]\ar[ur] & & \mathcal{E}_0 \boxtimes F(\mathcal{F}_0)
    \end{tikzcd}
\end{equation}
The triangles with an arrow of degree $+1$ are distinguished and the other triangles commute. The objects $E_n$ are unique up to non-unique isomorphism. See \cite[\href{https://stacks.math.columbia.edu/tag/0D7Y}{Tag 0D7Y}]{stacks-project}. Note that we use right Postnikov systems instead of left systems as in \cite{orlov1997equivalences}. This is because our complexes are unbounded to the left. 

We also have a trivial Postnikov system
\begin{equation}
    \begin{tikzcd}[column sep=tiny]
         \cdots & &K_2 \ar[ll, "+1"'] \ar[dr] & &K_1 \ar[ll, "+1"'] \ar[dr] \ar[dr] & &K_0 \ar[ll, "+1"'] \ar[dr, "\cong"] &\\
         & \cdots \ar[rr] & & \mathcal{E}_2 \boxtimes \mathcal{F}_2 \ar[rr] \ar[ur] & & \mathcal{E}_1 \boxtimes \mathcal{F}_1 \ar[rr]\ar[ur] & & \mathcal{E}_0 \boxtimes \mathcal{F}_0,
    \end{tikzcd}
\end{equation}
with $K_n$ the $n^{th}$ naive truncation of the complex (1). Thus $K_n$ has cohomology sheaves only in degrees 0 and $-n$. Since $X\times Y$ has finite homological dimension, for $n\gg 0$ we have $K_n \cong \mathcal{O}_\Delta \oplus \mathcal{H}^{-n} (K_n)[n],$ and $\mathcal{H}^{-n}(K_n)$ is a locally free $\mathcal{O}_{X \times Y}$-module. Our task in the remainder of the section is to show that $E_n$ has a similar decomposition $E \oplus C_n$ where $E$ is the desired kernel and $C_n$ slides off to $- \infty$.

Let us compute $\Phi_{E_n} (\mathcal{F})$ for $\mathcal{F}$ a coherent $\mathcal{O}_X$-module whose support has dimension 0: Apply the functor
\begin{equation*}
    K \mapsto \mathbf{R}pr_{2 *} (\mathbf{L}pr_1^*(\mathcal{F}) \otimes _{\mathcal{O}_{X \times Y}}^{\mathbf{L}} K)
\end{equation*}
to the entire diagram (3) to see that $\Phi_{E_n} (\mathcal{F})$ is an $n^{th}$ convolution of the complex
\begin{equation}
\Gamma (X, \mathcal{F} \otimes \mathcal{E}_{\bullet}) \otimes _k F(\mathcal{F}_\bullet)
\end{equation}
(the functor $\Gamma$ is underived since $\mathcal{F}$ has $0$-dimensional support).

On the other hand, we can apply the functor
\begin{equation*}
    K \mapsto F(\mathbf{R} pr_{2 *} (\mathbf{L}pr_1^*\mathcal{F} \otimes _{\mathcal{O}_{X \times X}}^{\mathbf{L}} K))
\end{equation*}
to the entire diagram (4). This transforms the second row of (4) into the complex (5). Thus, for $n \gg 0$, another $n^{th}$ convolution of the complex (5) is
\begin{align*}
    F(\mathbf{R} pr_{2 *} (\mathbf{L}pr_1^*\mathcal{F} &\otimes _{\mathcal{O}_{X \times X}}^{\mathbf{L}} K_n)) \\ & = F(\mathbf{R} pr_{2 *} (\mathbf{L}pr_1^*\mathcal{F} \otimes _{\mathcal{O}_{X \times X}}^{\mathbf{L}} \mathcal{O}_{\Delta})) \oplus F(\mathbf{R} pr_{2 *} (\mathbf{L}pr_1^*\mathcal{F} \otimes _{\mathcal{O}_{X \times X}}^{\mathbf{L}} \mathcal{H}^{-n}(K_n)))[n] \\ & = F(\mathcal{F}) \oplus F(\mathcal{K}_n)[n],
\end{align*}
where $\mathcal{K}_n$ is a coherent sheaf on $X$.

Therefore, by uniqueness of convolutions (note that negative Ext's vanish between the terms of the complex (5) since $F$ is fully faithful), we obtain isomorphisms
\begin{equation*}
    \Phi_{E_n} (\mathcal{F}) \cong F(\mathcal{F}) \oplus F(\mathcal{K}_n)[n].
\end{equation*}
By Lemma 1, we deduce that there is $m>0$ so that for every $\mathcal{F}$ coherent with 0-dimensional support and $n \gg 0$, $\Phi_{E_n} (\mathcal{F})$ has cohomology sheaves only in degrees $[-n-m, -n+m] \cup [-m, m].$ But knowing this for every such $\mathcal{F}$ implies that $E_n$ has cohomology sheaves only in degrees $[-n-m, -n+m] \cup [-m, m]$ (see \cite[\href{https://stacks.math.columbia.edu/tag/0FZ9}{Tag 0FZ9}]{stacks-project} for details). Thus if we take $n$ to be very large, since $X \times Y$ has finite homological dimension, we will have a decomposition $E_n = E \oplus C_n,$ where $E$ has cohomology sheaves in degrees $[-m,m]$ and $C_n$ has cohomology sheaves in degrees $[-n-m, -n+m].$ Then it follows that $\Phi_E(\mathcal{F}) \cong F(\mathcal{F})$ and $\Phi_{C_n} (\mathcal{F}) \cong F(\mathcal{K}_n)[n].$ Thus we have proved:

\begin{prop}
Let $X, Y$ be smooth proper varieties over a field $k$. Let $F : D^b_{coh} (X) \to D^b_{coh} (Y)$ be a fully faithful functor which is exact and $k$-linear. Then there is an object $E$  of $D^b_{coh} (X \times Y)$ such that $F(\mathcal{F}) \cong \Phi_E(\mathcal{F})$ for every coherent sheaf $\mathcal{F}$ on $X$ whose support has dimension 0.
\end{prop}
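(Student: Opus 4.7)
The plan is to set $E$ equal to a summand of an object $E_n$ built as a Postnikov convolution, with $n$ taken sufficiently large, and to verify $F(\mathcal{F}) \cong \Phi_E(\mathcal{F})$ only on coherent $\mathcal{F}$ of $0$-dimensional support, where the integral transform collapses to an underived operation.

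Concretely, I would first construct the Postnikov system (3) whose terms are $\mathcal{E}_i \boxtimes F(\mathcal{F}_i)$; its existence is guaranteed by the K\"unneth computation combined with fully faithfulness of $F$, which together force negative Ext groups between consecutive terms to vanish. In parallel, the naive truncations $K_n$ of the box-product resolution (1) yield a trivial Postnikov system, and since $X \times Y$ has finite homological dimension, $K_n$ splits for $n \gg 0$ as $\mathcal{O}_\Delta \oplus \mathcal{H}^{-n}(K_n)[n]$. For a fixed $\mathcal{F}$ with $0$-dimensional support, applying $\Phi_{(-)}(\mathcal{F})$ to (3) and applying $F \circ \Phi_{(-)}(\mathcal{F})$ to (4) exhibits $\Phi_{E_n}(\mathcal{F})$ and $F(\mathcal{F}) \oplus F(\mathcal{K}_n)[n]$ as two convolutions of the same complex (5). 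Uniqueness of convolutions, valid since fully faithfulness of $F$ kills negative Ext's between the terms of (5), then identifies them.

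Lemma 1 now forces both $F(\mathcal{F})$ and $F(\mathcal{K}_n)[n]$ to have cohomology concentrated in the disjoint windows $[-m,m]$ and $[-n-m,-n+m]$, uniformly in $\mathcal{F}$. The next step is to transfer this bi-window bound from $\Phi_{E_n}(\mathcal{F})$ to $E_n$ itself via Stacks Tag 0FZ9, which says that controlling an integral transform on all skyscrapers controls the kernel. Once the two windows are separated by more than the homological dimension of $X \times Y$, the obstruction to splitting vanishes and truncation produces $E_n = E \oplus C_n$ with $E$ concentrated in $[-m,m]$. Feeding this back into the isomorphism $\Phi_{E_n}(\mathcal{F}) \cong F(\mathcal{F}) \oplus F(\mathcal{K}_n)[n]$ and matching the two windows of cohomology yields $\Phi_E(\mathcal{F}) \cong F(\mathcal{F})$.

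The main obstacle, conceptually, is the passage from control over $\Phi_{E_n}(\mathcal{F})$ for every $0$-dimensional $\mathcal{F}$ to control over $E_n$ itself; this is precisely where de Jong's idea of testing against skyscrapers rather than against twists by an ample line bundle enters, and it rests on skyscrapers forming a spanning class. Everything else --- K\"unneth, the Postnikov construction, uniqueness of convolutions, and the splitting produced by a gap in cohomology --- is routine bookkeeping.
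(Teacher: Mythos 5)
Your proposal is correct and follows the paper's proof of Proposition 1 essentially step for step: the same Postnikov system built from a box-product resolution of the diagonal, the same comparison of two convolutions of the complex (5), the same use of Lemma 1 and the spanning-class/kernel-boundedness argument to split off $E$ from $E_n$. No substantive differences to report.
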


\section{Fully Faithful Functors Agreeing on Points}

The following lemma should be compared with Lemma 2.15 in \cite{orlov1997equivalences}.

\begin{lemma}
Let $X, Y$ be smooth proper varieties over a field $k$ and
$F: D^b_{coh}(X) \to D^b_{coh}(Y)$ an exact, $k$-linear functor. Suppose
\begin{equation*}
F: \mathrm{Ext}^*_{\mathcal{O}_X} (k(x_1), k(x_2)) \to \mathrm{Ext}^*_{\mathcal{O}_Y} (F(k(x_1)), F(k(x_2)))
\end{equation*}
is an isomorphism for every pair of closed points $x_1, x_2 \in X$. Then $F$ is fully faithful. 
\end{lemma}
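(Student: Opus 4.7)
The plan is to apply the classical spanning-class criterion for fully faithfulness, with $\Omega := \{k(x) : x \in X \text{ closed}\}$ as spanning class.

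First, I would produce adjoints. By Bondal--Van den Bergh the categories $D^b_{coh}(X)$ and $D^b_{coh}(Y)$ are saturated, so any exact $k$-linear functor between them, and in particular $F$, has both a left adjoint $H$ and a right adjoint $G$. Second, I would verify that $\Omega$ is a spanning class: for nonzero $A \in D^b_{coh}(X)$, picking any closed point $x$ in the support of $A$, a local Nakayama-style computation over the local ring $\mathcal{O}_{X,x}$ produces nonzero elements in both $\mathrm{Ext}^*(A, k(x))$ and $\mathrm{Ext}^*(k(x), A)$.

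Third, I would run the Bridgeland-style chase. The hypothesis, rewritten via the adjunctions, gives for all $\omega_1, \omega_2 \in \Omega$ and all $i \in \mathbb{Z}$
\[
\mathrm{Hom}(\omega_1, \omega_2[i]) \xrightarrow{\sim} \mathrm{Hom}(F\omega_1, F\omega_2[i]) = \mathrm{Hom}(\omega_1, GF\omega_2[i]) = \mathrm{Hom}(HF\omega_1, \omega_2[i]).
\]
So the cone of the unit $\omega_2 \to GF\omega_2$ admits no nonzero morphism from $\Omega$, and the cone of the counit $HF\omega_1 \to \omega_1$ admits no nonzero morphism to $\Omega$; by the spanning property both cones vanish, i.e., the unit $\eta_\omega$ and the counit $\epsilon_\omega$ are isomorphisms for every $\omega \in \Omega$. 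Now for arbitrary $A \in D^b_{coh}(X)$ and $\omega \in \Omega$ we compute
\[
\mathrm{Hom}(HFA, \omega[i]) = \mathrm{Hom}(FA, F\omega[i]) = \mathrm{Hom}(A, GF\omega[i]) = \mathrm{Hom}(A, \omega[i]),
\]
where the last equality uses that $\omega \to GF\omega$ is an isomorphism; moreover this composite is precisely the map induced by $\epsilon_A : HFA \to A$. Thus the cone of $\epsilon_A$ admits no maps into $\Omega[i]$, so it vanishes by the spanning property, and $HF \cong \mathrm{id}$. This last statement is equivalent to $\mathrm{Hom}(A, B) \xrightarrow{\sim} \mathrm{Hom}(FA, FB)$ being an isomorphism for all $A, B$, i.e., to $F$ being fully faithful.

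The step demanding the most care is tracking which side of the spanning-class property one is invoking at each point of the unit/counit chase; the only substantive input, the existence of both adjoints, is a direct appeal to Bondal--Van den Bergh which the introduction already cites.
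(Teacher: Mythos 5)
Your proposal is correct and follows essentially the same route as the paper: invoke Bondal--Van den Bergh to get both adjoints, use the hypothesis plus adjunction to show the unit is an isomorphism on skyscrapers, then for arbitrary $A$ chase through the counit and conclude via the spanning property of $\{k(x)\}$. The only cosmetic difference is notation ($H,G$ versus the paper's $L,R$) and that you also record the counit being an isomorphism on skyscrapers, which the argument does not strictly need.
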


\begin{proof} 
By \cite{bondal2003}, $F$ has exact $k$-linear left and right adjoints $R$ and $L$. By our assumption,
\begin{align*}
    \mathrm{Ext}^*_{\mathcal{O}_X} (k(x_1), k(x_2)) &=  \mathrm{Ext}^*_{\mathcal{O}_X} (F(k(x_1)), F(k(x_2))) 
    \\ &= \mathrm{Ext}^* _{\mathcal{O}_X} (k(x_1), RF(k(x_2)))
\end{align*}
for every pair $x_1, x_2$. Since the sheaves $k(x_1)$ form a spanning class, we conclude that the unit $\mathbf{1}_{D^b_{coh}(X)} \to RF$ is an isomorphism on the objects $k(x)$. Now let $K$ be an object of $D^b_{coh}(X)$ and $x \in X$ a closed point. Then
\begin{align*}
    \mathrm{Ext}^*_{\mathcal{O}_X} (LF(K), k(x)) &= \mathrm{Ext}^*_{\mathcal{O}_Y} (F(K), F(k(x))) \\ &= \mathrm{Ext}^*_{\mathcal{O}_X} (K, RF(k(x))) \\
    &=\mathrm{Ext}^*_{\mathcal{O}_X}(K , k(x)),
\end{align*}
induced by the co-unit $LF \to \mathbf{1}_{D^b_{coh} (X)}.$ Again, since the objects $k(x)$ form a spanning class, we conclude that $LF(K) \to K$ is an isomorphism for every $K$, so $F$ is fully faithful.
\end{proof}

\begin{lemma}
Let $X, Y$ be smooth proper varieties over a field $k$. Let $F, G : D^b_{coh} (X) \to D^b_{coh}(Y)$ be exact, $k$-linear functors. Suppose $F$ is fully faithful and $F(k(x)) \cong G(k(x))$ for every closed point $x \in X$. Then the essential image of $G$ is contained in the essential image of $F$.
\end{lemma}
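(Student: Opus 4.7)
The plan is to exploit adjoint functors together with the spanning class of skyscraper sheaves. By \cite{bondal2003}, both $F$ and $G$ admit right adjoints; denote them by $R$ and $G^R$. Since $F$ is fully faithful, $\mathrm{essim}(F)$ is a right admissible thick triangulated subcategory of $D^b_{coh}(Y)$, and the key characterization I would use is: an object $M \in D^b_{coh}(Y)$ lies in $\mathrm{essim}(F)$ if and only if $\mathrm{Hom}(M, N) = 0$ for every $N \in \mathrm{essim}(F)^\perp$. This follows from the distinguished triangle $FR(M) \to M \to C(M) \to FR(M)[1]$, in which $C(M) \in \mathrm{essim}(F)^\perp$: the vanishing hypothesis applied to $N = C(M)$ forces the map $M \to C(M)$ to be zero, so the triangle splits and exhibits $M$ as a direct summand of $FR(M) \in \mathrm{essim}(F)$.

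Given this, to show $G(K) \in \mathrm{essim}(F)$ for every $K \in D^b_{coh}(X)$ it suffices to check $\mathrm{Hom}(G(K), N) = 0$ for every $N \in \mathrm{essim}(F)^\perp$. By the adjunction $G \dashv G^R$ this equals $\mathrm{Hom}(K, G^R(N))$, so it is enough to prove $G^R(N) = 0$ whenever $N \in \mathrm{essim}(F)^\perp$.

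Here the hypothesis on skyscrapers enters. Since the sheaves $k(x)$ for closed $x \in X$ form a spanning class of $D^b_{coh}(X)$, the vanishing $G^R(N) = 0$ reduces to $\mathrm{Hom}(k(x), G^R(N)[i]) = 0$ for every closed $x$ and every $i$. Using the adjunction and the hypothesis $G(k(x)) \cong F(k(x))$, this Hom is identified with $\mathrm{Hom}(F(k(x)), N[i])$, which vanishes by the very definition of $\mathrm{essim}(F)^\perp$. The only conceptual step is the Hom-vanishing characterization of $\mathrm{essim}(F)$ via its orthogonal; everything else is a straightforward adjunction chase, so I do not anticipate a serious obstacle.
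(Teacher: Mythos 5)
Your proposal is correct and follows essentially the same route as the paper: reduce to showing $G(K)\in{}^{\perp}(\mathrm{essim}(F)^{\perp})$ using admissibility of $\mathrm{essim}(F)$ from Bondal--Van den Bergh, pass through the right adjoint of $G$, and kill its value on $\mathrm{essim}(F)^{\perp}$ by testing against the spanning class of skyscrapers via the hypothesis $F(k(x))\cong G(k(x))$. The only difference is that you spell out the semiorthogonal-decomposition argument behind the identification $\mathrm{essim}(F)={}^{\perp}(\mathrm{essim}(F)^{\perp})$, which the paper leaves implicit.
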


\begin{proof}
By \cite{bondal2003}, $G$ has an exact, $k$-linear right adjoint $R$. Applying the same theorem to $F$ shows that the essential image $\mathcal{A}$ of $F$ is an admissible subcategory of $D^b_{coh} (Y)$. Thus we only have to show that $G(K) \in \prescript{\perp}{}{(\mathcal{A}^\perp)}$ for every $K$ in $D^b_{coh}(X).$ So, suppose $L$ is in $\mathcal{A}^\perp$. Then
\begin{equation}
    \mathrm{Hom} (G(K), L) = \mathrm{Hom} (K, R(L)),
\end{equation}
so it suffices to show $R(L) = 0.$ Applying (6) to the objects $K = k(x)[i]$ gives
\begin{equation*}
    \mathrm{Ext}^*_{\mathcal{O}_X} (k(x), R(L)) = \mathrm{Ext}^*_{\mathcal{O}_Y} (G(k(x)), L) = \mathrm{Ext}^*_{\mathcal{O}_Y} (F(k(x)), L) = 0
\end{equation*}
since $L \in \mathcal{A}^\perp$. But the objects $k(x)$ form a spanning class so it follows that $R(L) = 0$ as needed.
\end{proof}

\begin{prop}
Let $X$ be a smooth proper variety over a field $k$. Let $F: D^b_{coh}(X) \to D^b_{coh}(X)$ be an exact, $k$-linear functor. Suppose $F(\mathcal{F}) \cong \mathcal{F}$ for every coherent sheaf on $X$ with 0-dimensional support. Then $F$ is an equivalence of categories. Furthermore, there exists a line bundle $\mathcal{L}$ on $X$ and a $k$-linear automorphism $f : X \to X$ such that, if we denote by $G$ the functor
\begin{equation*}
    G(K) = \mathbf{L}f^* K \otimes _{\mathcal{O}_X}^\mathbf{L} \mathcal{L},
\end{equation*}
then $F|_{Coh(\mathcal{O}_X)} \cong G|_{Coh(\mathcal{O}_X)}.$
\end{prop}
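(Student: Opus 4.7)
The plan is to prove the proposition in three steps: first show that $F$ is a triangulated equivalence, then show that it preserves the standard heart $\mathrm{Coh}(\mathcal{O}_X)$, and finally invoke Gabriel's reconstruction theorem to put $F|_{\mathrm{Coh}(\mathcal{O}_X)}$ in the stated form.

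For the first step I would apply Lemma 2. Fix isomorphisms $\phi_x : F(k(x)) \xrightarrow{\sim} k(x)$. Off the diagonal both $\mathrm{Ext}$ groups vanish, so we need only analyze $x_1 = x_2 = x$, where $\phi_x$ turns $F$ into a graded $k$-algebra endomorphism $\alpha$ of $E := \mathrm{Ext}^*_{\mathcal{O}_X}(k(x), k(x))$. In degree zero, $\alpha$ is a nonzero $k$-algebra endomorphism of the finite field extension $k(x)/k$, hence bijective. In degree one, a nonzero class $\xi$ corresponds to an indecomposable length-two sheaf $M_\xi$ supported at $x$; since $F$ is exact and $F(M_\xi) \cong M_\xi$ is still indecomposable, the distinguished triangle $F(k(x)) \to F(M_\xi) \to F(k(x)) \xrightarrow{F(\xi)} F(k(x))[1]$ cannot split, so $F(\xi) \neq 0$. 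Hence $\alpha$ is injective in degree one, and therefore bijective by equality of finite $k$-dimensions. Since $E$ is the exterior $k(x)$-algebra on the Zariski cotangent space at $x$, it is generated as a $k$-algebra in degrees zero and one, and so $\alpha$ is surjective (hence bijective) in every degree. Lemma 2 then applies and $F$ is fully faithful. Applying Lemma 3 with our $F$ (now known fully faithful) and $G = \mathrm{id}_{D^b_{coh}(X)}$ gives that the essential image of $\mathrm{id}$ lies in that of $F$, so $F$ is essentially surjective, hence an equivalence.

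For the second step, I would use that an object $K \in D^b_{coh}(X)$ is (isomorphic to) a coherent sheaf iff $\mathrm{Ext}^i_{\mathcal{O}_X}(K, k(x)) = 0$ for $i \notin [0, \dim X]$ and every closed $x$; this follows from the cohomology spectral sequence and Serre duality on the smooth proper $X$. Let $R$ be the right adjoint of $F$ supplied by \cite{bondal2003}; since $F$ is an equivalence with $F(k(x)) \cong k(x)$, we have $R(k(x)) \cong k(x)$, so adjunction gives $\mathrm{Ext}^i(F\mathcal{F}, k(x)) \cong \mathrm{Ext}^i(\mathcal{F}, k(x))$ for coherent $\mathcal{F}$. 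This lies in $[0, \dim X]$, so $F\mathcal{F}$ is coherent; the symmetric argument for $F^{-1}$ shows $F$ restricts to an equivalence of abelian categories on $\mathrm{Coh}(\mathcal{O}_X)$. For the third step, I would invoke Gabriel's reconstruction theorem: every $k$-linear auto-equivalence of $\mathrm{Coh}(\mathcal{O}_X)$ for a Noetherian scheme is isomorphic to $\mathcal{F} \mapsto f^*(\mathcal{F}) \otimes_{\mathcal{O}_X} \mathcal{L}$ for some $k$-linear automorphism $f : X \to X$ and some line bundle $\mathcal{L}$; since $f$ is flat, $f^* = \mathbf{L}f^*$, completing the proof. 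The main obstacle lies in the degree-one step in the first part: the strong hypothesis (actual isomorphism $F(\mathcal{F}) \cong \mathcal{F}$, not merely agreement up to isomorphism class) is precisely what is needed to force $F(\xi) \neq 0$ via the indecomposability of $M_\xi$; everything else is relatively formal.
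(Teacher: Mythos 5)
Your Step 1 (full faithfulness via Lemma 2, treating degrees $0$ and $1$ of the exterior algebra $\mathrm{Ext}^*_{\mathcal{O}_X}(k(x),k(x))$ and using indecomposability of a non-split extension) and your final appeal to Gabriel's theorem coincide with the paper's Steps 1 and 7. The gap is in your second step. The claimed characterization --- that $K \in D^b_{coh}(X)$ is a coherent sheaf in degree $0$ if and only if $\mathrm{Ext}^i_{\mathcal{O}_X}(K, k(x)) = 0$ for $i \notin [0, \dim X]$ and all closed $x$ --- is false. Take $K = \mathcal{O}_X[1]$, a line bundle sitting in degree $-1$: then $\mathrm{Ext}^i(K, k(x)) = \mathrm{Ext}^{i-1}(\mathcal{O}_X, k(x))$ is nonzero only for $i = 1 \in [0, \dim X]$. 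The spectral sequence $\mathrm{Ext}^p(\mathcal{H}^{-q}(K), k(x)) \Rightarrow \mathrm{Ext}^{p+q}(K, k(x))$ does show that vanishing in degrees $i<0$ forces $\mathcal{H}^b(K)=0$ for $b>0$ (the edge map $\mathrm{Hom}(\mathcal{H}^b(K), k(x)) \to \mathrm{Ext}^{-b}(K, k(x))$ is injective for $b$ maximal), but vanishing in degrees $i > \dim X$ does not bound the cohomology from below: the relevant corner term $\mathrm{Ext}^{\dim X}(\mathcal{H}^a(K), k(x))$ vanishes for every $x$ whenever $\mathcal{H}^a(K)$ has positive depth at every point (e.g.\ is locally free), so no contradiction arises from $a<0$. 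Consequently your argument does not rule out that $F(\mathcal{F})$ is, say, a shifted vector bundle; the ``symmetric argument for $F^{-1}$'' inherits the same defect, and the criterion cannot be repaired by also imposing the dual condition on $\mathrm{Ext}^*(k(x),K)$, since $\mathcal{O}_X[1]$ passes that test as well.

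This is exactly the point where the paper has to work hardest. Its Steps 2--6 first show that $F$ sends rank-$r$ locally free sheaves to rank-$r$ locally free sheaves, using the characterization $\mathrm{Ext}^*_{\mathcal{O}_X}(K, k(x)) \cong k(x)^{\oplus r}[0]$ (which, unlike yours, pins down the degree); then that $F(\mathcal{O}_D)$ is a sheaf isomorphic to $\mathcal{L}|_D$ for $D$ effective, where $\mathcal{L} = F(\mathcal{O}_X)$; and then construct isomorphisms $F(\mathcal{O}_X(-nD)) \cong \mathcal{O}_X(-nD)\otimes_{\mathcal{O}_X}\mathcal{L}$ natural in the inclusions $\mathcal{O}_X(-(n+1)D) \subset \mathcal{O}_X(-nD)$. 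Taking $D$ to be the complement of an affine open $U$, one computes $\Gamma(U, \mathcal{H}^i(F(K))) = \mathrm{colim}_n \mathrm{Ext}^i_{\mathcal{O}_X}(\mathcal{O}_X(nD), F(K)) = \Gamma(U, \mathcal{H}^i(K \otimes^{\mathbf{L}}_{\mathcal{O}_X} \mathcal{L}))$, and this is what actually shows that $F$ preserves the heart. You need to supply an argument of this kind (or some other valid device that detects the degree of the cohomology sheaves) in place of your $\mathrm{Ext}$-criterion.
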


\noindent \textbf{Remark.} In fact, since $F(k(x)) \cong k(x)$ for every closed point $x \in X$, the automorphism $f$ must be the identity on underlying topological spaces. This implies that $f$ is the identity if $X$ has dimension at least 1 \cite[\href{https://stacks.math.columbia.edu/tag/0G05}{Tag 0G05}]{stacks-project}. This is irrelevant to us though: We care only that $G$ is a Fourier--Mukai auto-equivalence.

\begin{proof}
\underline{Step 1:} $F$ is an equivalence of categories.

Applying Lemma 3 to the functors $\mathbf{1}_{D^b_{coh} (X)}$ and $F$, we see that $F$ is essentially surjective. To prove that $F$ is fully faithful, it suffices by Lemma 2 to prove that
\begin{equation*}
    F: \mathrm{Ext}^*_{\mathcal{O}_X} (k(x), k(y)) \to \mathrm{Ext}^*_{\mathrm{O}_X} (F(k(x)), F(k(y)))
\end{equation*}
is an isomorphism for every pair of closed points $x, y$. It follows from our assumptions that both sides are trivial if $x \neq y$, so there is nothing to prove. If $x = y,$ both sides are isomorphic to 
\begin{equation*}
    \bigwedge_{k(x)} \nolimits^* \mathrm{Ext}^1_{\mathcal{O}_X} (k(x), k(x)),
\end{equation*}
and moreover, $F$ is a $k$-algebra homomorphism. It will therefore suffice to show $F$ is surjective in degrees 0 and 1 (or equivalently injective or bijective). In degree 0, every non-zero homomorphism $k(x) \to k(x)$ is an isomorphism, and functors preserve isomorphisms, so indeed $F$ is injective in degree 0. Suppose given a non-zero extension class $\xi$ of $k(x)$ by $k(x),$ represented by a non-split short exact sequence
\begin{equation*}
    0 \to k(x) \to \mathcal{E} \to k(x) \to 0
\end{equation*}
of coherent sheaves. Then $F(\mathcal{E})$ is a coherent sheaf, and $F(\xi)$ is represented by the short exact sequence
\begin{equation*}
    0 \to F(k(x)) \to F(\mathcal{E}) \to F(k(x)) \to 0
\end{equation*}
of coherent sheaves. Furthermore, we have abstract isomorphisms $F(k(x)) \cong k(x)$ and $F(\mathcal{E}) \cong \mathcal{E}$ by assumption. But now we simply note that an arbitrary extension $\mathcal{E}'$ of $k(x)$ by $k(x)$ is trivial if and only if $\mathcal{E}' \cong k(x) \oplus k(x).$ Thus $F( \mathcal{E}) \cong \mathcal{E} \not \cong k(x) \oplus k(x),$ so $F(\xi) \neq 0$.

\underline{Step 2:} If $\mathcal{E}$ is a locally free $\mathcal{O}_X$-module of rank $r$, then $F(\mathcal{E}[0])$ is also locally free rank $r$ sitting in degree zero. 

Because locally free sheaves of rank $r$ are those objects $K$ of $D^b_{coh}(X)$ such that
\begin{equation*}
    \mathrm{Ext}^*_{\mathcal{O}_X} (K, k(x)) \cong k(x)^{\oplus r} [0]
\end{equation*}
for every closed point $x \in X$.

\underline{Step 3:} If $D$ is an effective divisor on $X$, then $F(\mathcal{O}_D) \in Coh(\mathcal{O}_X)$. 

We have an exact triangle 
\begin{equation*}
    F(\mathcal{O}_X(-D)) \to F(\mathcal{O}_X) \to F(\mathcal{O}_D) \to F(\mathcal{O}_X(-D))[1],
\end{equation*}
thus we see that the cohomology sheaves of $F(\mathcal{O}_D)$ are zero except possibly in degrees $0$ and $-1$, and there is a long exact sequence
\begin{equation*}
    0 \to \mathcal{H}^{-1} (F(\mathcal{O}_D)) \to F(\mathcal{O}_X(-D)) \to F(\mathcal{O}_X) \to \mathcal{H}^{0} (F( \mathcal{O}_D)) \to 0,
\end{equation*}
where by Step 1, $F(\mathcal{O}_X(-D))$ and $F(\mathcal{O}_X)$ are line bundles on $X$. We need to show that $F(\mathcal{O}_X(-D)) \to F(\mathcal{O}_X)$ is injective, but here that's equivalent to
\begin{equation*}
    \mathrm{Supp} \hspace{.5em} \mathcal{H}^{0} (F( \mathcal{O}_D)) \subsetneq X.
\end{equation*}
This follows from 
\begin{align*}
    \mathrm{Hom} _{\mathcal{O}_X} (\mathcal{H}^0 (F(\mathcal{O}_D)), k(x)) &= \mathrm{Hom} _{\mathcal{O}_X} (F(\mathcal{O}_D), k(x)) \\
    &= \mathrm{Hom} _{\mathcal{O}_X} (F(\mathcal{O}_D), F(k(x)))\\
    &= \mathrm{Hom}_{\mathcal{O}_X} (\mathcal{O}_D, k(x))
\end{align*}
for $x \in X$ closed. In fact, this shows $\mathcal{H}^0(F(\mathcal{O}_D))$ and $\mathcal{O}_D$ have the same support.

\underline{Step 4.} Set $\mathcal{L} = F(\mathcal{O}_X).$ Then $F(\mathcal{O}_D) \cong \mathcal{L}|_D$.

We have a short exact sequence of coherent sheaves
\begin{equation*}
    0 \to F( \mathcal{O}_X(-D)) \otimes _{\mathcal{O}_X} \mathcal{L}^{-1} \to \mathcal{O}_X \to F( \mathcal{O}_D) \otimes_{\mathcal{O}_X} \mathcal{L}^{-1} \to 0,
\end{equation*}
and we need to show 
\begin{equation*}
    F( \mathcal{O}_D) \otimes_{\mathcal{O}_X} \mathcal{L}^{-1} \cong \mathcal{O}_D.
\end{equation*}
We know that the left hand side $= \mathcal{O}_{D'}$ for some effective divisor $D',$ and from the proof of Step 3, it follows that $D$ and $D'$ have the same underlying closed subset. Thus $D = \sum n_i D_i$ and $D' = \sum m_i D_i$ with $D_i$ prime divisors and $n_i, m_i$ positive integers. To show that $n_i = m_i,$ let $x$ be a general point of $D_i,$ (so that $D_i$ is regular at $x$ and $x \not \in D_j$ for $j \neq i$). Choose a regular system of parameters $f_1, \dots , f_d$ for $X$ at $x$ such that $D_i = V(f_1)$ in a neighborhood of $x$. Let $\mathcal{F}_N$ be the structure sheaf of the scheme $\mathrm{Spec} (\mathcal{O}_{X,x} / (f_1^{N}, f_2, \dots , f_d)).$ Then we can recover $n_i$ as 
\begin{equation*}
    \dfrac{1}{[k(x): k]}\mathrm{dim}_{k} \mathrm{Hom} _{\mathcal{O}_X}(\mathcal{O}_D,\mathcal{F}_N)
\end{equation*}
for $N \gg 0$. By applying $F$ to $\mathcal{O}_D$ and $\mathcal{F}_N$ we see $n_i = m_i$ so $D = D'$.

\underline{Step 5:} Define a natural isomorphism on line bundles $\mathcal{O}_X(-D), D$ effective.

By Step 3 we have unique isomorphisms $F(\mathcal{O}_X(-D)) \to \mathcal{O}_X(-D) \otimes_{\mathcal{O}_X} \mathcal{L}$ making the diagram
\begin{equation*}
\begin{tikzcd}
     F(\mathcal{O}_X(-D)) \ar[rr] \ar[dr] & &\mathcal{L} \\
      & \mathcal{O}_X(-D) \otimes_{\mathcal{O}_X} \mathcal{L} \ar[ur] &   
\end{tikzcd}
\end{equation*}
commute, where the horizontal arrow is $F$ of the morphism $\mathcal{O}_X(-D) \to \mathcal{O}_X.$ These isomorphisms are natural with respect to inclusions of divisors $D \subset E$.

\underline{Step 6:} $K \in D^b_{coh} (X)$ is in the subcategory $Coh (\mathcal{O}_X)$ if and only if $F(K)$ is.

Let $U$ be an affine open of $X$ with complement $D$ (reduced induced subscheme structure). Applying fully-faithfulness of $F$ and the isomorphisms in Step 5 gives
\begin{align*}
    \Gamma (U, \mathcal{H}^i(F(K))) &=  \mathrm{colim}_{n}\mathrm{Ext}^i_{\mathcal{O}_X} (\mathcal{O}_X(nD), F(K)) \\ &= \mathrm{colim}_{n}\mathrm{Ext}^i_{\mathcal{O}_X}(\mathcal{O}_X (nD), K \otimes _{\mathcal{O}_X}^{\mathbf{L}} \mathcal{L}) \\
    &= \Gamma (U, \mathcal{H}^i (K \otimes _{\mathcal{O}_X}^{\mathbf{L}} \mathcal{L})),
\end{align*}
so $K$ and $F(K)$ have non-zero cohomology sheaves in the same degrees.

\underline{Step 7:} Conclude.

By Step 6, $F$ restricts to a $k$-linear auto-equivalence of the subcategory $Coh(\mathcal{O}_X).$ But these are classified by Gabriel's Theorem (see \cite{gabriel} or \cite[\href{https://stacks.math.columbia.edu/tag/0FZR}{Tag 0FZR}]{stacks-project}): They are all isomorphic to
\begin{equation*}
    \mathcal{F} \mapsto f^*\mathcal{F} \otimes _{\mathcal{O}_X} \mathcal{L}
\end{equation*}
for some $k$-linear automorphism $f$ of $X$ and line bundle $\mathcal{L}$ on $X$, as needed.
\end{proof}

\section{Fully Faithful Functors Agreeing on Coherent Sheaves}

For the following three lemmas, let $X$ be a smooth proper variety of dimension at least 1 over a field $k$. Choose a finite affine open covering $X = \bigcup_i U_i$ such that $\emptyset \subsetneq U_i \subsetneq X$ for every $i$. Let $D_i = X \setminus U_i$ (reduced induced subscheme structure). We will consider the systems of sheaves 
\begin{equation*}
   \cdots \subset \mathcal{O}_X(-nD_i) \subset \mathcal{O}_X(-(n-1)D_i) \subset \cdots \subset \mathcal{O}_X(-D_i).
\end{equation*}
Compare Lemmas A, B, C below to the properties (a), (b), (c) of an ample sequence in \cite{orlov1997equivalences}.

\begin{lemmaa}
Given a coherent sheaf $\mathcal{F}$ on $X$, there exists a surjection from a sheaf of the form
\begin{equation}
\bigoplus _i \mathcal{O}_X(-n_i D_i)^{\oplus r_i}
\end{equation}
to $\mathcal{F}$. Furthermore, we can choose this surjection so that for $m_i \geq n_i$, the induced map \\ $\bigoplus _i \mathcal{O}_X(-m_i D_i)^{\oplus r_i} \to \mathcal{F}$ remains surjective.
\end{lemmaa}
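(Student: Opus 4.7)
The plan is to build the required surjection one affine chart at a time and then sum the resulting maps. The key tool I will use is the standard identification
\begin{equation*}
    \mathrm{colim}_n \Gamma(X, \mathcal{F}(nD_i)) = \Gamma(U_i, \mathcal{F}),
\end{equation*}
valid because $D_i$ is an effective Cartier divisor (every Weil divisor on a smooth variety is Cartier) with complement the affine open $U_i$. This says precisely that every section of $\mathcal{F}$ over $U_i$ extends to a section of $\mathcal{F}(nD_i)$ on all of $X$ for $n$ sufficiently large.

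For each $i$, since $U_i$ is affine, $\mathcal{F}|_{U_i}$ is a finitely generated $\Gamma(U_i, \mathcal{O}_X)$-module; choose generating sections $t_{i,1}, \dots, t_{i, r_i} \in \Gamma(U_i, \mathcal{F})$. By the colimit identity, for $n_i$ large enough each $t_{i,j}$ lifts to a global section $\tilde t_{i,j} \in \Gamma(X, \mathcal{F}(n_i D_i))$, and this is the same data as a map $\mathcal{O}_X(-n_i D_i) \to \mathcal{F}$. Packaging these together I form
\begin{equation*}
    \varphi : \bigoplus_i \mathcal{O}_X(-n_i D_i)^{\oplus r_i} \to \mathcal{F}.
\end{equation*}
To check surjectivity it suffices to restrict to each $U_i$: the $i$-th summand already surjects onto $\mathcal{F}|_{U_i}$ by choice of the $t_{i,j}$, and since the $U_i$ cover $X$ the map $\varphi$ is surjective on all of $X$.

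For the "furthermore" claim: whenever $m_i \geq n_i$, the natural inclusion $\mathcal{O}_X(-m_i D_i) \hookrightarrow \mathcal{O}_X(-n_i D_i)$ is an isomorphism after restriction to $U_i$ (both equal $\mathcal{O}_{U_i}$ there), so composing with $\varphi$ produces a map $\bigoplus_i \mathcal{O}_X(-m_i D_i)^{\oplus r_i} \to \mathcal{F}$ whose restriction to $U_i$ is the same as $\varphi|_{U_i}$, hence still surjective. Covering $X$ by the $U_i$ then gives surjectivity globally. I don't anticipate a serious obstacle: the entire argument reduces to the section-extension colimit identity, which can either be cited from the Stacks Project or justified in a sentence using that $D_i$ is Cartier and $U_i$ is its complement.
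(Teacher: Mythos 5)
Your argument is correct and is essentially the paper's own proof: both choose generators of $\mathcal{F}|_{U_i}$ over the affine $U_i$, use the colimit identity identifying sections over $U_i$ with maps $\mathcal{O}_X(-n_iD_i) \to \mathcal{F}$ for large $n_i$, sum over $i$, and observe that increasing $n_i$ does not change the restriction to $U_i$, so surjectivity persists.
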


\begin{proof}
Choose a surjection $ \varphi_i : \mathcal{O}_{U_i} ^{\oplus r_i} \to \mathcal{F}|_{U_i},$ then using the formula
\begin{equation*}
    \mathrm{Hom}_{\mathcal{O}_{U_i}} (\mathcal{O}_{U_i} ^{\oplus r_i}, \mathcal{F}|_{U_i}) = \mathrm{colim}_{n_i} \mathrm{Hom}_{\mathcal{O}_X} (\mathcal{O}_X(-n_iD_i)^{\oplus r_i}, \mathcal{F}),
\end{equation*}
we find an $n_i$ and a map $\mathcal{O}_X(-n_iD_i)^{\oplus r_i} \to \mathcal{F}$ which agrees with $\varphi_i$ when restricted to $U_i$. Note that if we increase $n_i,$ the induced map still agrees with $\varphi_i$ on $U_i$. Thus putting these maps together for every $i$ we are done.
\end{proof}

\begin{lemmab}
Assume $K \in D^b_{coh}(X), \mathcal{H}^i(K) = 0$ for $i \geq 0$. Then given a map $\bigoplus _i \mathcal{O}_X(-n_iD_i)^{\oplus r_i}[0] \to K$, there exists $N$ such that the induced map $\bigoplus_i \mathcal{O}_X(-m_iD_i)^{\oplus r_i}[0] \to K$ is zero for $m_i \geq N.$
\end{lemmab}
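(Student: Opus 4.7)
The plan is to reduce to vanishing of a filtered colimit. Since the given map $\bigoplus_i \mathcal{O}_X(-n_i D_i)^{\oplus r_i}[0] \to K$ has only finitely many components indexed by $i$, and each component is itself a direct sum of $r_i$ maps out of $\mathcal{O}_X(-n_i D_i)[0]$, it suffices to show: for every $i$ and every morphism $\alpha : \mathcal{O}_X(-n_i D_i)[0] \to K$, the composite $\mathcal{O}_X(-m D_i)[0] \hookrightarrow \mathcal{O}_X(-n_i D_i)[0] \xrightarrow{\alpha} K$ vanishes for $m$ large enough. Taking $N$ to be the maximum of the finitely many resulting thresholds proves the lemma.

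Fix $i$, and let $j_i : U_i \hookrightarrow X$ denote the complementary open immersion. Since $\mathcal{O}_X(-m D_i)$ is invertible,
\begin{equation*}
\mathrm{Hom}_{D^b_{coh}(X)}(\mathcal{O}_X(-m D_i)[0], K) = H^0(X, K \otimes_{\mathcal{O}_X} \mathcal{O}_X(m D_i)),
\end{equation*}
with transition maps induced by the canonical section of $\mathcal{O}_X(D_i)$. I claim the filtered colimit of these as $m \to \infty$ is zero; granted this, the image of $\alpha$ in the colimit is zero, so $\alpha$ must die at some finite stage, which is exactly what we need.

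To establish the claim, I would combine three ingredients: the identification $\mathrm{colim}_m \mathcal{O}_X(m D_i) = j_{i*} \mathcal{O}_{U_i}$ (realizing $j_{i*}\mathcal{O}_{U_i}$ as the union of the subsheaves of rational functions on $X$ with pole order at most $m$ along $D_i$); the fact that cohomology on the Noetherian scheme $X$ commutes with filtered colimits; and the projection formula together with affineness of $j_i$, so that $j_{i*} \mathcal{O}_{U_i}$ is flat (being a filtered colimit of line bundles) and $\mathbf{R} j_{i*} = j_{i*}$ on quasi-coherent sheaves. Together these yield
\begin{equation*}
\mathrm{colim}_m H^0(X, K \otimes_{\mathcal{O}_X} \mathcal{O}_X(m D_i)) = H^0(X, K \otimes_{\mathcal{O}_X} j_{i*} \mathcal{O}_{U_i}) = H^0(U_i, K|_{U_i}),
\end{equation*}
and the latter equals $\Gamma(U_i, \mathcal{H}^0(K)|_{U_i}) = 0$ since $U_i$ is affine and $\mathcal{H}^0(K) = 0$ by hypothesis. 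The only delicate point is this colimit identification, but it is exactly the substitute for Serre vanishing advertised in the introduction, applied to the cohomology sheaves of $K$ via the hypercohomology spectral sequence.
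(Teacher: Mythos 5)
Your argument is correct and is essentially the paper's own proof, just with the details spelled out: the paper likewise reduces to the vanishing of $\mathrm{colim}_N \mathrm{Hom}_{\mathcal{O}_X}(\mathcal{O}_X(-ND_i), K) = \mathrm{Hom}_{\mathcal{O}_{U_i}}(\mathcal{O}_{U_i}, K|_{U_i}) = 0$, which is exactly your colimit identification plus affineness of $U_i$ and $\mathcal{H}^0(K)=0$.
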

\begin{proof}
Since
\begin{equation*}
    \mathrm{colim}_N \mathrm{Hom} _{\mathcal{O}_X}(\mathcal{O}_X(-ND_i), K) = \mathrm{Hom}_{ \mathcal{O}_{U_i}}(\mathcal{O}_{U_i}, K|_{U_i}) = 0.
\end{equation*}
\end{proof}

\begin{lemmac}
Let $\mathcal{F}$ be a coherent $\mathcal{O}_X$-module. Then there exists $N$ such that if $n_i \geq N,$
\begin{equation*}
    \mathrm{Hom}_{\mathcal{O}_X} (\mathcal{F}, \bigoplus _i \mathcal{O}_X(-n_i D_i)^{\oplus r_i}) = 0.
\end{equation*}
\end{lemmac}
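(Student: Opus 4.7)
The plan is to reduce, by finiteness of the cover and the fact that $\mathrm{Hom}$ commutes with finite direct sums, to showing that for each index $i$ individually there exists $N_i$ with $\mathrm{Hom}_{\mathcal{O}_X}(\mathcal{F}, \mathcal{O}_X(-nD_i)) = 0$ for all $n \geq N_i$; then $N := \max_i N_i$ will do.

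Fix $i$ and write $D = D_i$. The inclusions of ideal sheaves $\mathcal{O}_X(-(n+1)D) \hookrightarrow \mathcal{O}_X(-nD) \hookrightarrow \mathcal{O}_X$ realize $\mathrm{Hom}_{\mathcal{O}_X}(\mathcal{F}, \mathcal{O}_X(-nD))$ as a decreasing chain of $k$-subspaces of $\mathrm{Hom}_{\mathcal{O}_X}(\mathcal{F}, \mathcal{O}_X)$. Properness of $X$ makes the latter finite-dimensional over $k$, so by the descending chain condition the chain stabilizes, and it suffices to show the intersection $\bigcap_n \mathrm{Hom}_{\mathcal{O}_X}(\mathcal{F}, \mathcal{O}_X(-nD))$ is zero.

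For the intersection, I would argue as follows. Suppose $\phi : \mathcal{F} \to \mathcal{O}_X$ has image $\mathcal{I} \subset \mathcal{O}_X(-nD)$ for every $n$. At a point $x \in D$, let $f$ be a local equation for the Cartier divisor $D$ (Cartier by smoothness of $X$). Krull's intersection theorem in the Noetherian local domain $\mathcal{O}_{X,x}$ yields $\mathcal{I}_x \subset \bigcap_n (f^n) = 0$, so the coherent sheaf $\mathcal{I}$ has zero stalk at every point of $D$ and therefore vanishes in an open neighborhood of $D$. Its support is then a closed subset of $X$ disjoint from the nonempty $D$ (nonempty because $U_i \subsetneq X$), hence a proper closed subset of $X$. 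But any coherent subsheaf of $\mathcal{O}_X$ whose support is a proper closed subset of the integral variety $X$ must vanish, by torsion-freeness of $\mathcal{O}_X$. Hence $\phi = 0$.

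The main obstacle, such as it is, is the assembly of three standard ingredients---finite-dimensionality from properness, Krull's intersection theorem locally at $D$, and torsion-freeness of $\mathcal{O}_X$ on an integral scheme---into a single vanishing statement; each ingredient individually is routine, and they fit together cleanly given the standing hypothesis that each $D_i$ is a nonempty (Cartier) divisor.
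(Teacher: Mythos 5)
Your proof is correct and follows essentially the same strategy as the paper's: both realize the Hom groups as a descending chain of finite-dimensional $k$-subspaces (finiteness from properness) and show the intersection vanishes because anything lying in $\mathcal{O}_X(-nD)$ for all $n$ has infinite order of vanishing along the nonempty divisor $D$. The only difference is that the paper first reduces to $\mathcal{F}$ a line bundle via Lemma A and then argues with functions in $k(X)$, whereas you work directly with the image subsheaf of $\phi$ via Krull's intersection theorem and torsion-freeness of $\mathcal{O}_X$ --- a minor streamlining that avoids the appeal to Lemma A.
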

\begin{proof}
It suffices to show that if $D$ is a non-empty effective divisor on $X$, then
\begin{equation*}
    \mathrm{Hom}_{\mathcal{O}_X} (\mathcal{F}, \mathcal{O}_X(-nD)) = 0
\end{equation*}
for $n \gg 0$. There is by Lemma A a surjection $\bigoplus _i \mathcal{O}_X(-n_iD_i)^{\oplus r_i} \to \mathcal{F}$. Thus it suffices to prove the lemma for $\mathcal{F} = \mathcal{O}_X(-E)$ for some effective divisor $E$. That is, we must show $H^0(X, \mathcal{O}_X(E-nD)) = 0$ for $n\gg 0$. These form a descending sequence of finite dimensional $k$-subspaces of $k(X)$, so it suffices to show the intersection is zero. Let $f$ be in the intersection. Let $Z$ be a prime divisor occurring with positive multiplicity in the divisor $D$. Then $f \in k(X)$ has infinite order of vanishing along $Z$, so $f = 0$.
\end{proof}

For a complex $K$, we will write
\begin{equation*}
    w(K) = \mathrm{sup} \{b-a | \mathcal{H}^b(K) \neq 0 \neq \mathcal{H}^a(K)\}
\end{equation*}
for the \emph{width} of $K$.

\begin{prop} Let $X, Y$ be smooth proper varieties over a field $k$, and let $F, G : D^b_{coh} (X) \to D^b_{coh} (Y)$ be exact, $k$-linear functors. If $F$ is fully faithful and
\begin{equation*}
    F |_{Coh(\mathcal{O}_X)} \cong G |_{Coh(\mathcal{O}_X)},
\end{equation*}
then $F \cong G$.
\end{prop}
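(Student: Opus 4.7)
The plan is to induct on the width $w(K)$, mirroring the proof of \cite[Proposition 2.16]{orlov1997equivalences}, with the families $\{\mathcal{O}_X(-nD_i)\}$ playing the role of Orlov's ample sequence via Lemmas A, B, C. If $\dim X = 0$ the statement is immediate (every object of $D^b_{coh}(X)$ is a direct sum of shifted coherent sheaves), so I assume $\dim X \geq 1$. For the base case $w(K) \leq 0$, $K$ is a shift of a coherent sheaf and the hypothesis gives $F(K) \cong G(K)$ directly.

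For the inductive step, suppose $w(K) = n > 0$ and, after shifting, that $\mathcal{H}^i(K)$ vanishes outside $[-n, 0]$. Using Lemma A, I choose a surjection $\mathcal{E} := \bigoplus_i \mathcal{O}_X(-n_iD_i)^{\oplus r_i} \twoheadrightarrow \mathcal{H}^0(K)$; the ``furthermore'' clause allows me to take the $n_i$ as large as needed later. The obstruction to lifting this surjection to a morphism $\mathcal{E}[0] \to K$ in $D^b_{coh}(X)$ lies in $\mathrm{Hom}(\mathcal{E}, \tau_{\leq -1}K[1])$, and since $\tau_{\leq -1}K[1]$ has no cohomology in non-negative degrees, Lemma B makes this group vanish for $n_i$ large enough. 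After fixing such a lift $\alpha : \mathcal{E}[0] \to K$, let $C$ be its cone, sitting in a distinguished triangle
\[
    \mathcal{E}[0] \xrightarrow{\alpha} K \to C \xrightarrow{\delta} \mathcal{E}[1].
\]
A direct cohomology computation shows $\mathcal{H}^i(C) = 0$ outside $[-n, -1]$, so $w(C) \leq n-1$ and the inductive hypothesis provides an isomorphism $\beta : F(C) \xrightarrow{\sim} G(C)$.

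Applying $F$ and $G$ to this triangle, I would use TR3 and the triangulated five lemma to extend the data $\eta_\mathcal{E} : F(\mathcal{E}) \xrightarrow{\sim} G(\mathcal{E})$ (from the hypothesis) and $\beta$ (from induction) to an isomorphism $F(K) \xrightarrow{\sim} G(K)$. The required commutativity of the right-hand square amounts to the vanishing of the obstruction $\eta_\mathcal{E}[1] \circ F(\delta) - G(\delta) \circ \beta \in \mathrm{Hom}(F(C), G(\mathcal{E})[1])$. Via $\eta_\mathcal{E}$ and the full faithfulness of $F$, this Hom group is identified with $\mathrm{Ext}^1_{\mathcal{O}_X}(C, \mathcal{E})$, which a truncation argument (using $\mathcal{H}^i(C) = 0$ for $i \geq 0$ so that only the contribution $\mathrm{Ext}^{1-k}(\mathcal{H}^{-k}(C), \mathcal{E})$ at $k=1$ survives) identifies with $\mathrm{Hom}(\mathcal{H}^{-1}(C), \mathcal{E})$.

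The hard part will be killing this obstruction. The plan is to exploit the freedom to replace $\beta$ by $\beta + \gamma$ for $\gamma \in \mathrm{Hom}(F(C), G(C))$; such modifications shift the obstruction by an arbitrary element in the image of $G(\delta) \circ -$, which by the long exact sequence of the $G$-side triangle is the kernel of post-composition with $G(\alpha)[1]$. Thus it suffices to check that the obstruction dies in $\mathrm{Hom}(F(C), G(K)[1])$. Here Lemma C enters: taking the $n_i$ even larger forces $\mathrm{Hom}(\mathcal{H}^{-1}(K), \mathcal{E}) = 0$, reducing $\mathrm{Hom}(\mathcal{H}^{-1}(C), \mathcal{E})$ to the contribution from $\ker(\mathcal{E} \to \mathcal{H}^0(K))$. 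The remaining piece is controlled using Lemma 3 of Section 3 (which applies since $F|_{Coh(\mathcal{O}_X)} \cong G|_{Coh(\mathcal{O}_X)}$ in particular agrees on the skyscrapers $k(x)$): it places $G(K)$ in the essential image of $F$ and lets us transport the entire question back into that image, where the full-faithfulness of $F$ together with the vanishing $F(\alpha[1] \circ \delta) = 0$ from the original triangle forces the obstruction to be zero. Once commutativity is achieved, TR3 and the five lemma close out the induction.
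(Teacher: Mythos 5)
Your overall strategy (induction on width, Lemmas A--C standing in for the ample sequence) is the right one, but the proposal has a genuine gap, and it is the central one: you only ever construct isomorphisms $F(K) \xrightarrow{\sim} G(K)$ one object at a time. The statement $F \cong G$ is an isomorphism of \emph{functors}, so you must produce isomorphisms $\alpha_K$ compatible with every morphism $K \to K'$, and object-wise isomorphism does not imply this. The paper's proof carries this compatibility as part of the inductive hypothesis, and it is not an afterthought: it is needed already in the construction (to know that the square over $L[-1] \to \mathcal{P}[-b]$ commutes so that TR3 applies), it requires a separate ``independence of the surjection'' check, a two-case analysis of arbitrary morphisms $K \to K'$, and even the base case (morphisms between sheaves placed in different degrees) is nontrivial and needs Orlov's argument 2.16.4. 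None of this appears in your proposal.

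This omission also explains why you end up fighting an obstruction that the paper never meets. In the paper one arranges, via Lemma~C applied to the \emph{top} cohomology $\mathcal{H}^b(K)$, that $\mathrm{Hom}(K, \mathcal{P}[-b]) = \mathrm{Hom}(K, F(\mathcal{P}[-b])) = 0$; then the filler on $K$ exists by TR3 (the other square commutes by the inductive compatibility hypothesis) and is \emph{unique} making the right square commute, which is exactly what makes the later naturality checks possible. Your alternative --- modify $\beta$ by some $\gamma$ so that the obstruction dies --- is both underargued and problematic: (i) after replacing $\beta$ by $\beta + \gamma$ you may lose that it is an isomorphism, which you need for the five lemma; (ii) the final step, where you claim that transporting into the essential image of $F$ ``forces the obstruction to be zero,'' does not go through as stated. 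Writing $G(K) \cong F(M)$ and $G(\alpha)\circ\eta_{\mathcal{E}} = F(\alpha')$ for some $\alpha' : \mathcal{E} \to M$, the vanishing you need is $\alpha'[1]\circ\delta = 0$, i.e.\ that $\alpha'$ factors through $\alpha : \mathcal{E} \to K$; this is essentially the commutativity you are trying to prove and is not supplied by $F(\alpha[1]\circ\delta)=0$. The fix is to restructure the induction as in the paper: include compatibility with all morphisms in the inductive hypothesis, impose condition (b) on $\mathcal{P}$ via Lemma~C to get uniqueness of the filler, and then prove independence of the surjection and compatibility (the two cases $b' < b$ and $b' \geq b$) separately.
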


\begin{proof}
Lemmas 2 and 3 imply that $G$ is fully faithful with essential image contained in the essential image of $F$. Therefore, $F^{-1} \circ G$ (makes sense and) is fully faithful. Our assumption implies that
\begin{equation*}
   F^{-1} \circ G |_{Coh (\mathcal{O}_X)}
\end{equation*}
is isomorphic to the inclusion $Coh(\mathcal{O}_X) \to D^b_{coh} (X).$ If we can show $F^{-1} \circ G \cong \mathbf{1}_{D^b_{coh} (X)},$ this will imply $F \cong G$ and we will be done. Thus we are reduced to proving the Proposition in the special case $X = Y, G = \mathbf{1}_{D^b_{coh} (X)}.$

Next, if $X$ has dimension 0, i.e., $X = \mathrm{Spec} (k')$ for $k'$ a finite separable extension of $k$, then the result follows from the fact that $D^b_{coh} (X)$ is equivalent to the category of graded $k'$-vector spaces. Thus we may assume $\mathrm{dim} (X) \geq 1$, and we may use Lemmas A, B, C as well as their notation.

For this, we construct by induction on $w = w(K)$ isomorphisms $\alpha_K: K \to F(K)$ for all complexes of width $\leq w$ compatible with all morphisms between such $K$.

The base case is $w(K) = 0$, i.e., $K = \mathcal{F}[n]$ for $\mathcal{F}$ coherent on $X$ and $n \in \mathbf{Z}.$ We are given an isomorphism $\alpha_{\mathcal{F}} : \mathcal{F} \to F(\mathcal{F})$, and we set $\alpha_K = \alpha_{\mathcal{F}} [n]$. One must still check compatibility with all morphisms $K \to K'$ with $w(K) = w(K') = 0$: This is known if $K$ and $K'$ are concentrated in the same degree, but it is not obvious otherwise. However, Orlov's proof of compatibility (see 2.16.4 of \cite{orlov1997equivalences}) carries through unchanged in our context. See also \cite[\href{https://stacks.math.columbia.edu/tag/0FZW}{Tag 0FZW}]{stacks-project}.

\underline{Inductive Step:} We assume given isomorphisms $K \to F(K)$ for all $K$ with $w(K) \leq w-1$ compatible with all morphisms between such $K$. 

\underline{Construction:} Let $K$ be a complex of width $w = b-a$ with cohomology sheaves living in $[a,b]$. There is a surjection from a sheaf $\mathcal{P}$ of the form (7) to $\mathcal{H}^b(K)$. There is an obstruction to lifting this surjection along $K[b] \to \mathcal{H}^b(K)$ which lives in $\mathrm{Ext}^b (\mathcal{P}, \tau_{< b} K).$ Thus by Lemma B, after possibly increasing the integers $n_i$, we can assume (a) the surjection factors through $K[b] \to \mathcal{H}^b(K).$ By Lemma C, after possibly increasing the integers $n_i$ again, we can assume (b) that $\mathrm{Hom}_{\mathcal{O}_X} (\mathcal{H}^b(K), \mathcal{P}) = 0.$ From this it follows that $\mathrm{Hom}(K, \mathcal{P}[-b]) = 0.$ Then there are distinguished triangles as follows:
\begin{equation*}
    \begin{tikzcd}
    \mathcal{P}[-b] \ar[r]\ar[d] &K \ar[r] \ar[d, dashed] \ar[r] &L \ar[r, "+1"] \ar[d] & { } \\
    F(\mathcal{P}[-b]) \ar[r] & F(K) \ar[r] &F(L) \ar[r, "+1"] & { }
    \end{tikzcd}
\end{equation*}
The solid vertical arrows are from the inductive hypothesis (note that $L$ has cohomology sheaves in $[a,b-1]$ by construction), and the dashed arrow exists by the axioms of a triangulated category. It is an isomorphism since the other two vertical arrows are. Since $\mathrm{Hom}(K, \mathcal{P}[-b]) = \mathrm{Hom}(K, F(\mathcal{P}[-b]))  = 0$, the dashed arrow $K \to F(K)$ can be characterized as the unique arrow making the right hand square commute.

\underline{Independence of Surjection:} If $\mathcal{P}'\to \mathcal{H}^b(K)$ is another surjection where $\mathcal{P}'$ has the form (7) and (a) and (b) are satisfied, we want to show that we get the same definition of the arrow $K \to F(K).$ Note that $\mathcal{P} \oplus \mathcal{P}' \to \mathcal{H}^b(K)$ also satisfies (a) and (b) so we may assume the map $\mathcal{P}'[-b] \to K$ factors through $\mathcal{P}[-b] \to K.$ Choose a distinguished triangle $\mathcal{P}'[-b'] \to K \to L' \to \mathcal{P}'[-b+1],$ and write $\alpha, \alpha'$ for the arrow constructed using $\mathcal{P}, \mathcal{P}'$, respectively. Then $K \to L$ factors through $K \to L'$ since by the axioms of a triangulated category there is a fill in:
\begin{equation*}
    \begin{tikzcd}
         \mathcal{P}'[-b] \ar[r] \ar[d] &K \ar[r] \ar[d, "="] & L' \ar[r, "+1"] \ar[d, dashed] & {} \\
         \mathcal{P}[-b] \ar[r] & K \ar[r] &L \ar[r, "+1"] &{}
    \end{tikzcd}
\end{equation*}
But then by the definition of $\alpha'$ and the inductive hypothesis, the diagram 
\begin{equation*}
    \begin{tikzcd}
         K \ar[r] \ar[d, "\alpha'"] &L' \ar[r] \ar[d] &L \ar[d] \\
         F(K) \ar[r] &F(L') \ar[r] & F(L)
    \end{tikzcd}
\end{equation*}
commutes. Thus by uniqueness of $\alpha$, $\alpha = \alpha'$.

\underline{Compatibility:} We need to show the isomorphisms constructed are compatible with all morphisms $K \to K'$ between objects of width at most $w$. We do this by induction on $w(K)+w(K').$ Let $K$ have cohomology sheaves in degrees $[a,b]$ and $K'$ in degrees $[a',b'].$ Note that all cases with $w(K), w(K') < w$ are known by the first induction hypothesis.

\underline{Case 1:} $b'< b$.

Choose a triangle $\mathcal{P}[-b] \to K \to L \to \mathcal{P}[-b+1]$ as in the construction. Composing with $K \to K'$ we get a map $\mathcal{P}[-b] \to K'$. Then using Lemma B we may assume by increasing the $n_i$ that this map is zero. Then $K \to K'$ factors through $L.$ Since $L$ has cohomology sheaves in $[a,b-1],$ our  inductive hypotheses imply that the right square
\begin{equation*}
    \begin{tikzcd}
    K \ar[r] \ar[d] &L \ar[r] \ar[d] &K'\ar[d] \\
    F(K) \ar[r] &F(L) \ar[r] &F(K')
    \end{tikzcd}
\end{equation*}
is commutative, while the left square commutes by construction. So we're done in this case.

\underline{Case 2:} $b' \geq b$.

Choose a distinguished triangle $\mathcal{P}'[-b'] \to K' \to L' \to \mathcal{P}'[-b+1]$ as in the construction (for $K'$ instead of $K$) with the property that $\mathrm{Hom}(\mathcal{H}^{b'}(K), \mathcal{P}') = 0$. This is possible by Lemma C. Then by our choice of $\mathcal{P}',$ 
$\mathrm{Hom}(K, K') \to \mathrm{Hom} (K, L')$ is injective, and since $F(K) \cong F(K')$ and $F$ is fully faithful, $\mathrm{Hom}(K, F(K')) \to \mathrm{Hom} (K, F(L'))$ is injective as well. Consider the diagram:
\begin{equation*}
    \begin{tikzcd}
    K \ar[r] \ar[d] &K' \ar[r] \ar[d] &L' \ar[d] \\
    F(K) \ar[r] & F(K')  \ar[r] & F(L') 
    \end{tikzcd}
\end{equation*}
The right square commutes by construction. Thus by the injectivity on Hom's, it suffices to show the outer square commutes. But this follows from our induction hypotheses since $L'$ has cohomology sheaves in $[a', b'-1].$
\end{proof}

\begin{proof}[Proof of Theorem]
By Proposition 1, there is $E \in D^b_{coh} (X \times Y)$ such that $F(\mathcal{F}) \cong \Phi_E(\mathcal{F})$ for every coherent $\mathcal{O}_X$-module $\mathcal{F}$ with 0-dimensional support. By Lemma 3, the essential image of $\Phi_E$ is contained in the essential image of $F$, thus the functor $F^{-1} \circ \Phi_E$ makes sense. By combining Propositions 2 and 3, we obtain an isomorphism $F^{-1} \circ \Phi_E \cong G$ where $G$ is a Fourier--Mukai auto-equivalence of $D^b_{coh} (X)$. But then $F \cong \Phi_E \circ G^{-1}$, which is Fourier--Mukai.
\end{proof}

\newpage

\bibliographystyle{alpha}
\bibliography{references}

\begin{thebibliography}{{Bal}09}

\bibitem[{Bal}09]{2009arXiv0905.3148B}
Matthew~Robert {Ballard}.
\newblock {Equivalences of derived categories of sheaves on quasi-projective
  schemes}.
\newblock {\em arXiv e-prints}, page arXiv:0905.3148, May 2009.

\bibitem[BdB03]{bondal2003}
Alexei Bondal and Michel~Van den Bergh.
\newblock Generators and representability of functors in commutative and
  noncommutative geometry.
\newblock {\em Mosc. Math. J.}, 3(1):1--36, 2003.

\bibitem[Gab62]{gabriel}
Pierre Gabriel.
\newblock Des catégories abéliennes.
\newblock {\em Bull. Soc. Math. France}, 90:323--448, 1962.

\bibitem[Orl97]{orlov1997equivalences}
Dmitri Orlov.
\newblock Equivalences of derived categories and k3 surfaces.
\newblock {\em J. Math. Sci. (New York)}, 84(5):1361–1381, 1997.

\bibitem[{Sta}20]{stacks-project}
The {Stacks Project Authors}.
\newblock \textit{Stacks Project}.
\newblock \url{https://stacks.math.columbia.edu}, 2020.

\end{thebibliography}

\end{document}